\newtheorem*{qst}{Question}
\newtheorem*{thm}{Theorem}
\newtheorem*{cor}{Corollary}
\newtheorem*{prp}{Proposition}
\begin{document}

\title{Free actions on surfaces that do not extend to arbitrary actions on 3-manifolds}
\author{Eric G.\ Samperton}
\thanks{The author thanks Carlos Segovia, Jes\'us Emilio Dom\'inguez, Nathan Dunfield and Bernardo Uribe for helpful conversations, and acknowledges support from the National Science Foundation via grant DMS \#2038020.}
\date{January 4, 2022}

\begin{abstract} 
We provide the first known example of a finite group action on an oriented surface $T$ that is free, orientation-preserving, and does not extend to an arbitrary (in particular, possibly non-free) orientation-preserving action on any compact oriented 3-manifold $N$ with boundary $\partial N = T$. This implies a negative solution to a conjecture of Dom\'inguez and Segovia, as well as Uribe's evenness conjecture for equivariant unitary bordism groups.  We more generally provide sufficient conditions implying that infinitely many such group actions on surfaces exist.  Intriguingly, any group with such a non-extending action is also a counterexample to the Noether problem over the complex numbers $\mathbb{C}$.  In forthcoming work with Segovia we give a complete homological characterization of those finite groups admitting such a non-extending action, as well as more examples and non-examples.  We do not address here the analogous question for non-orientation-preserving actions.
\end{abstract}

\maketitle

\section{Introduction}

\subsection*{Main results}
Let $G$ be a finite group.  Dom\'inguez and Segovia recently asked the following \cite{DominguezSegovia:Schur}:

\begin{qst}[Free extension conjecture]
Suppose $G$ acts freely on an oriented surface $T$ by orientation-preserving homeomorphisms.  Does there always exist a compact oriented 3-manifold $N$ with boundary $\partial N = T$ and an orientation-preserving action of $G$ on $N$---possibly non-free---that extends the given action on $T$?
\end{qst}

\noindent They showed the answer to this question is yes for several types of groups $G$ with very different properties, including abelian groups, dihedral groups, alternating groups and symmetric groups.  Accordingly, they conjectured that it is always true.

In this short note we provide the first counterexamples.  The smallest confirmed counterexamples we could find have order $3^5 = 243$.  We initially found them using a systematic search with GAP.  Using the notation of GAP's small group library \cite{GAP}, the counterexamples are SmallGroup(243, 28), SmallGroup(243, 29) and SmallGroup(243, 30).  They are isoclinic.  We later realized that work of Moravec \cite{Moravec:unramified} and others \cite{HoshiKangKunyavskii:unramified} on the calculation of Bogomolov multipliers allows for a computer-free proof that infinitely many counterexamples exist.  It is still possible that there are smaller counterexamples, but they would have to be of order $2^6=64$, $2^7=128$ or $2^6 \times 3 = 192$.  We leave a more careful study of such groups to more expansive forthcoming work with Segovia.

\subsection*{Related results and background} Before diving in, we briefly remark on the history of this question.  It surprised us to learn that \cite{DominguezSegovia:Schur} seems to be the first time this precise question appears in the literature.  In personal communications, Segovia has told us that Bernardo Uribe first asked it of him, and Uribe in turn informed us that he was motivated by the evenness conjecture for equivariant unitary bordism groups, which he recently discussed in the 2018 ICM Proceedings \cite{Uribe:evenness}.  This conjecture was originally posed as a problem by Rowlett in 1978 \cite{Rowlett:metacyclic}: is the $G$-equivariant unitary bordism ring $\Omega_*^U(G)$ always a free module over the usual unitary bordism ring $\Omega_*^U$ in even generators?  This is known to be true for all finite abelian groups \cite{Stong:actions,Ossa:abelian} and all finite metacyclic groups \cite{Rowlett:metacyclic}, and Uribe conjectured it is true in general \cite{Uribe:evenness}.  However, if $G$ satisfies the evenness conjecture, then $G$ must satisfy the free extension conjecture \cite{Uribe:personal} (see also forthcoming joint work with Angel, Segovia and Uribe).  Thus, our counterexamples to the latter are also counterexamples to the former.

We note that some of the results of \cite{DominguezSegovia:Schur} were shown previously by Hidalgo \cite{Hidalgo:Schottky}, and Reni and Zimmerman \cite{ReniZimmerman:handlebodies}.  Specifically, \cite{Hidalgo:Schottky,ReniZimmerman:handlebodies} both show that every orientation-preserving action (not necessarily free) of an abelian group on a surface extends to an action on a handlebody, and \cite{ReniZimmerman:handlebodies} shows the same for dihedral group actions.

A number of other related results can be found in \cite{ReniZimmerman:handlebodies}.  For instance, building on \cite{GradolatoZimmerman:hyperbolic}, they show that the only $84(g-1)$ Hurwitz actions of type $\mathrm{PSL}(2,q)$ with $7 \le q < 1000$ that do not extend to any 3-manifold occur when $q=7$ and $q=27$.  Hurwitz actions are not free.  As they note and we shall see, ``it is somewhat easier to construct non-extending group actions on surfaces with fixed points because one can use the structure of the singular set in 3-orbifolds."

The closest result to our counterexamples we could find is \cite[Prop.~1]{ReniZimmerman:handlebodies} and the examples that follow.  Their proposition gives sufficient conditions to guarantee that a group $G$ admits a free action on a surface that does not extend to any action \emph{on a handlebody}.  They note, ``we do not know if these actions extend to some other compact 3-manifold; in fact, at present we do not know any example of a free action which extends to a compact 3-manifold but not to a handlebody."  To keep this note short we address neither of the two questions implicit here.  Namely: Do the examples of free actions in \cite[Prop.~1]{ReniZimmerman:handlebodies} extend to 3-manifolds that aren't handlebodies?  Does every free action that extends extend to a handlebody?

We also do not attempt to address another very natural question: do there exist free actions on surfaces that do not extend even over a \emph{non-oriented} 3-manifold?

\section{Obstructions to extendability}
We begin in earnest now with some definitions.  Let us set the conventions that for the rest of this note all 2- and 3-manifolds are oriented and all actions on them are orientation-preserving.

We say a free action of a group $G$ on a surface $T$ \emph{extends} if there exists a 3-manifold $N$ with $\partial N = T$ and an action of $G$ on $N$ that extends the given action on $T$.  If there exists such an $N$ where the $G$ action is free, then we will say the action of $G$ on $T$ \emph{freely extends}.

In between ``freely extends" and ``extends" we have the intermediate notion of ``non-singularly extends."  If $G$ acts on $N$ non-freely, then the \emph{ramification locus} is the set of points in $N$ stabilized by non-trivial elements of $G$.  The \emph{branch locus} is the image of the ramification locus in the quotient space $N/G$.  An action of $G$ on $N$ is called \emph{non-singular} if the branch locus is either empty or a codimension 2 submanifold (that is, a link inside $N/G$).  We say a (free) action of $G$ on $T$ \emph{non-singularly extends} if there exists a manifold $N$ with $\partial N = T$ and a non-singular action of $G$ on $N$ that extends the given action on $T$.  An action that extends neither freely nor non-singularly might be said to extend \emph{only singularly}.

By \emph{spherical subgroup} of $G$ we mean any subgroup isomorphic to the fundamental group of an orientable spherical 2-orbifold with 3 cone points.  That is, a spherical subgroup of $G$ is any subgroup isomorphic to a non-cyclic subgroup of $SO(3)$.  Even more concretely, the spherical subgroups of $G$ are those that are isomorphic to one of the following: a dihedral group $D_{2n}$, the alternating groups $A_4$ or $A_5$, or the symmetric group $S_4$.

We let $BG$ denote the classifying space of $G$.  The \emph{Schur multiplier} of $G$ is $M(G) := H_2(BG)$.  A \emph{toral class} in $M(G)$ is any element that can be represented by a map from a torus $S^1 \times S^1$ into $BG$.

\begin{thm}
Let $G$ be a finite group.
\begin{enumerate}
\item Every free surface action of $G$ freely extends if and only if $M(G) = 0$.
\item Every free surface action of $G$ extends non-singularly if and only if $M(G)$ is generated by toral classes.
\item If $M(G)$ is not generated by toral classes and $G$ has no spherical subgroups, then $G$ affords free surface actions that do not extend.
\end{enumerate}
\label{th:main}
\end{thm}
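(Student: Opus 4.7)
The plan is to recast the question as an oriented bordism problem over the classifying space $BG$. A free $G$-action on a closed oriented surface $T$ is equivalent to a $\pi_1$-surjective classifying map $f: \Sigma \to BG$ of the regular cover $T \to \Sigma := T/G$, and a free extension to a 3-manifold $N$ with $\partial N = T$ is equivalent to an extension of $f$ to an oriented 3-manifold $W$ with $\partial W = \Sigma$, since pulling back $EG \to BG$ recovers $N$ from $W$. Because $\Omega_n^{SO}$ vanishes for $1 \le n \le 3$, the Atiyah--Hirzebruch spectral sequence yields $\Omega_2^{SO}(BG) \cong H_2(BG;\mathbb{Z}) = M(G)$. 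A connect-sum stabilization---gluing on the boundary of a handlebody $H_d$ equipped with a chosen surjection $\pi_1(H_d) \twoheadrightarrow G$ (any $d$ at least the rank of $G$ works)---shows every class in $M(G)$ is realized by a $\pi_1$-surjective surface map. Part (1) is then immediate.

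For part (2), let $N$ be a non-singular extension, set $M = N/G$, and let $L \subset \mathrm{int}(M)$ be the cyclic-isotropy singular link. Drilling out $\nu(L)$ gives an oriented 3-manifold with boundary $\Sigma \sqcup \coprod_i T_i^2$ and a map to $BG$; each torus boundary is classified by a commuting pair (meridian-stabilizer, longitude) and hence contributes a toral class. Thus $[\Sigma \to BG]$ lies in the subgroup of $M(G)$ generated by toral classes. Conversely, any commuting pair $(a,b) \in G \times G$ can be capped off by the orbifold solid torus $D^2(\mathrm{ord}(a)) \times S^1$ carrying $\langle a\rangle$-isotropy along its core---well-defined since $G$ finite forces $a$ to have finite order---and assembling these caps with an oriented 3-manifold bordism realizing $[\Sigma \to BG] = \sum \pm \tau_i$ produces a non-singular extension. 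This proves (2).

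For part (3), it suffices to show that if $G$ has no spherical subgroups, every orientation-preserving action of $G$ on a 3-manifold is automatically non-singular; (2) then supplies a free non-extending action whenever $M(G)$ fails to be generated by toral classes. Any finite orientation-preserving action on a 3-manifold is smoothable (e.g., by Meeks--Scott), hence locally linear, and each stabilizer embeds faithfully in $SO(3)$. The finite subgroups of $SO(3)$ are the cyclic groups---with $1$-dimensional axis of fixed points, contributing codimension-$2$ strata---and the non-cyclic ``spherical'' groups $D_{2n}, A_4, S_4, A_5$---each with only an isolated fixed point, which would contribute $0$-dimensional strata. Absent spherical subgroups, the branch locus has no $0$-strata, so the action is non-singular.

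The main technical point, and the obstacle I expect to require real care, is the realization step underlying parts (1) and (2): producing, from any class in $M(G)$, a $\pi_1$-surjective surface map (so that it corresponds to an honest free $G$-action), and dually checking that any candidate extension of $f$ over $W$ pulls back to a \emph{connected} 3-manifold cover whose boundary action matches the one we began with. Both are handled by the connect-sum stabilization above together with the naturality of pullback along $EG \to BG$, but it is these bookkeeping checks that make the bordism reformulation rigorous.
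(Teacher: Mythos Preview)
Your proof is correct, and for parts (1) and (3) it is essentially the paper's argument: both of you identify free $G$-actions on surfaces with classifying maps $\Sigma\to BG$ and invoke $\Omega_2^{SO}(BG)\cong H_2(BG)$ for (1), and both use local linearity of a smoothed action to embed stabilizers in $SO(3)$ for (3). Your treatment of (1) is in fact slightly more careful than the paper's sketch: you explicitly verify, via the handlebody connect-sum stabilization, that every class in $M(G)$ is represented by a $\pi_1$-surjective map, which is needed for the ``only if'' direction. One quibble: Meeks--Scott is not really the right citation for smoothing an arbitrary finite topological action on a 3-manifold; the paper simply asserts this step, and a clean justification goes through Moise plus the orbifold theorem.

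Where you genuinely diverge from the paper is in part (2). The paper introduces the auxiliary classifying space
\[
BG_G \;=\; BG \;\bigsqcup_{\mathrm{ev}}\; LBG\times D^2,
\]
invokes an external theorem that maps $M\to BG_G$ classify codimension-2 branched $G$-covers, and then reads off from a Mayer--Vietoris computation that $\ker\bigl(H_2(BG)\to H_2(BG_G)\bigr)$ is exactly the toral subgroup. You instead argue entirely inside the 3-manifold: drill out the branch link from the quotient to obtain a free $G$-bordism from $\Sigma$ to a disjoint union of tori, and conversely cap off toral boundaries with model orbifold solid tori carrying cyclic isotropy along the core. This is more elementary and self-contained---no $BG_G$, no appeal to \cite{me:Schur}---and it makes the role of the toral classes geometrically transparent. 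What the paper's approach buys is a single clean homological object $H_2(BG_G)$ (the ``branched Schur multiplier'') that packages the obstruction once and for all and connects directly to the Bogomolov-multiplier literature; what your approach buys is a proof a reader can check without leaving the page.
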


\begin{proof}
\begin{enumerate}
\item This well-known fact follows because oriented bordism and homology are isomorphic in dimension 2.  We sketch the argument as a warm-up to the next part of the proof.

Let $G$ act freely on $T$ and let $S := T/G$ be the quotient surface.  Since $T$ is a regular $G$-cover of $S$, there exists a classifying map $f: S \to BG$ where $BG$ is the classifying space of $G$.  Let $[S] \in H_2(S)$ be the orientation class of $S$ induced from the orientation of $T$.  Then $f_*[S] = 0 \in M(G) = H_2(BG) \cong \Omega_2(BG)$ if and only if the map $f$ extends to a map $\hat{f}: M \to BG$ where $M$ is an oriented 3-manifold $M$ with $\partial M = S$.  Let $N$ be the $G$-cover of $M$ determined by $\hat{f}$.  Then the $G$ action on $N$ is free and extends the $G$ action on $\partial N = T$.

\item Just as we proved part (1) by looking downstairs in the quotient, we do the same with this part.  Our key tool is a certain classifying space for nonsingular branched covers studied in \cite{me:Schur}.

Following Brand \cite{Brand:branched} and Ellenberg, Venkatesh and Westerland \cite{EVW:Hurwitz2}, define
\[ BG_G := BG \bigsqcup_{ev} LBG \times D^2\]
where $LBG=\mathrm{Map}(S^1,BG)$ is the free loop space of $BG$ and $ev$ is the evaluation map
\[ ev: LBG \times \partial D^2 = LBG \times S^1 \to BG. \]
Intuitively, we construct $BG_G$ by gluing a disk to $BG$ along every map from a circle into $BG$.

Theorem 2.2 of \cite{me:Schur} shows that for any (smooth) manifold $M$ (homotopy classes of) maps $M \to BG_G$ are in natural bijection with (concordance classes of) branched $G$-covers of $M$ with branch locus a framed codimension 2 submanifold $L \subset M$.  In particular, $H_2(BG_G)$ is isomorphic to a flavor of 2-dimensional non-singular branched $G$-bordism group, although we do not need to make this precise.  

We now proceed as in part (1).  Suppose $G$ acts freely on $T$ and let $S := T/G$ be the quotient surface.  Since $T$ is a regular $G$-cover of $S$, it is also a branched $G$-cover of $S$ (with empty branch locus), and so \cite[Thm.~2.2]{me:Schur} says there exists a classifying map $f': S \to BG_G$.  Because homology and oriented bordism are the same in dimension 2, $f'_*[S] = 0 \in H_2(BG_G) \cong \Omega_2(BG_G)$ if and only if the map $f'$ extends to a map $\widehat{f'}: M \to BG_G$ where $M$ is an oriented 3-manifold with $\partial M = S$.  Let $N$ be the branched $G$-cover of $M$ determined from $\widehat{f'}$ by \cite[Thm.~2.2]{me:Schur}.  Then the $G$ action on $N$ is nonsingular and extends the $G$ action on $\partial N = T$.

Of course the inclusion $i: BG \to BG_G$ induces a map
\[i_*: M(G) = H_2(BG) \to H_2(BG_G) \]
and we have both $f' = i \circ f$ and $f'_* = i_* \circ f_*$.  Thus every surface action of $G$ extends non-singularly if and only if $i_* = 0$.  Finally, an easy calculation using the Mayer-Vietoris sequence (carried out in detail in \cite[Sec.~2.3]{me:Schur}) shows $\mathrm{ker} \ i_*$ is precisely the subgroup of $M(G)$ generated by toral classes.

\item Having proved the previous part, this is now a standard argument used in the study of orbifolds, see e.g. \cite[Thms.~2.3 \& 2.5]{CooperHodgsonKerckhoff:orbifolds}.  Suppose $M(G)$ is not generated by toral classes.  Then by part (2) there exists a free action of $G$ on some surface $T$ that does not extend non-singularly.  Suppose this action extends singularly to some 3-manifold $N$.  We will show that $G$ must have a spherical subgroup.

Smoothing both $N$ and the action of $G$, we can find a $G$-invariant Riemannian metric on $N$ (e.g. after averaging over $G$).  By assumption, the ramification locus in $N$ is nonempty and is not a codimension 2 submanifold.  Suppose $x \in N$ has non-trivial stabilizer $G_x \le G$.  If $U \subset N$ is a small enough $G_x$-invariant ball centered at $x$, then $U$ is homeomorphic (via the inverse of the exponential map) to a $G_x$ invariant ball centered at $0$ in the tangent space $T_xN$.  The metric is $G$-invariant and $G$ preserves the orientation of $N$, so $G_x$ acts on $T_xN$ and hence $\partial U \cong S^2$ as a subgroup of $SO(3)$.  Moreover, the portion of the ramification locus inside $U$ is homeomorphic to the cone over the non-trivially stabilized points in $\partial U \cong S^2$.  Since by assumption the ramification locus is not a manifold, we can pick an $x$ such that $G_x$ acts on $\partial U$ with more than 2 non-trivially stabilized points and hence is non-cyclic.  Thus, we can pick an $x$ such that $G_x$ is a spherical subgroup.
\end{enumerate}
\vspace{-\baselineskip}
\end{proof}

Spherical orbifold groups all have even order.  So we have an obvious corollary.

\begin{cor}
If $G$ is a finite group of odd order and $M(G)$ is not generated by toral classes, then $G$ affords free surface actions that do not extend. \qed
\label{c:sufficient}
\end{cor}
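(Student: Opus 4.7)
The plan is to simply invoke part (3) of Theorem \ref{th:main}. The only thing to verify is that the hypothesis ``$G$ has no spherical subgroups'' is automatically satisfied when $|G|$ is odd. Recall from the definitions preceding the theorem that the spherical subgroups of $G$ are those isomorphic to one of $D_{2n}$, $A_4$, $A_5$, or $S_4$. Each such group has even order ($2n$, $12$, $60$, and $24$, respectively), and this is indicated in the sentence immediately preceding the corollary.

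By Lagrange's theorem, a group of odd order has no subgroup of even order, and in particular no spherical subgroup. Thus if $G$ has odd order and $M(G)$ is not generated by toral classes, both hypotheses of Theorem \ref{th:main}(3) are satisfied, and we conclude that $G$ affords a free surface action that does not extend. There is no real obstacle here; the content of the corollary is entirely in part (3) of the theorem, together with the numerical observation that all finite non-cyclic subgroups of $SO(3)$ contain an element of order $2$.
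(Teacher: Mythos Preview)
Your proof is correct and matches the paper's own argument: the paper simply notes that spherical subgroups all have even order and then records the corollary with a \qed. Invoking Lagrange's theorem to rule out even-order subgroups in an odd-order group, and then applying part~(3) of the Theorem, is exactly the intended reasoning.
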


Therefore, to find a free action of a finite group on a surface that does not extend over a compact 3-manifold, it suffices to find an odd order group $G$ such that $M(G)$ is not generated by tori.

\section{Counterexamples to the free extension conjecture}

\subsection*{Finding counterexamples with GAP}
We describe here functions written for GAP that we used to perform a systematic search for (odd order) counterexamples to the free extension conjecture.  The code can be downloaded from the author's University of Illinois webpage at
\begin{center}\url{https://smprtn.pages.math.illinois.edu/ToralProbe.gap}\end{center}
After writing it, we learned that the branched Schur multiplier is isomorphic to the Bogomolov multiplier (see below).  Hence, our code has similar functionality to GAP code written by Moravec \cite{Moravec:unramified}, with two caveats.  On one hand, Jezernik and Moravec later wrote better optimized code using a novel algorithm \cite{JezernikMoravec:128}, and it appears to outperform ours slightly.  On the other hand, our code also handles non-polycyclic (hence, non-solvable) groups.

The most useful function is ToralProbe(G), which takes an input group G and outputs a GAP record of the form (group = G, multiplier =$|M(G)|$, order of subgroup generated by toral classes, test Boolean multiplier =? order).  For example, running ToralProbe(SmallGroup(243,28)) outputs the record rec( group := $\langle$pc group of size 243 with 5 generators$\rangle$, multiplier := 9, order := 3, test := false ).  This calculation took 119 milliseconds on a 2016 MacBook with a 1.1 GHz Intel Core m3 processor.

We also include the function NaivePcGenusTwoProbe(G).  The input must be a polycyclic group $G$ such as SmallGroup(243,28).  It returns a record similar to ToralProbe(G), except that it determines the order of the subgroup of $M(G)$ generated by both toral classes and genus 2 surfaces.  For example, NaivePcGenusTwoProbe(SmallGroup(243,28)) returns rec(group := $\langle$pc group of size 243 with 5 generators$\rangle$, multiplier := 9, order := 9, genustwotest := true ).  This took our computer 15 seconds.

An especially helpful function is SearchThroughRangeOfSmallGroupOrders(a,b).  For all $G$ with $a \le |G| \le b$ whose Schur multiplier is not generated by tori, this prints $|G|$, the index of $G$ in the small group library, $|M(G)|$, and the order of the toral subgroup of $M(G)$.  Thus the easiest way to verify our results is to run Read(``ToralProbe.gap") and then run SearchThroughRangeOfSmallGroupOrders(1,243).  The resulting output indicates all 296 examples of groups of order less than 244 whose Schur multiplier is not generated by tori.  There are 9 of order 64, 230 of order 128, 54 of order 192 and 3 of order 243.  This calculation took 12.5 minutes on our computer.

Further calculations show that $M(G)$ is generated by surfaces of genus at most 2 for all 296 of these groups.  In particular, this means that for each of the 3 groups of order 243, the smallest non-extending free actions are on surfaces with Euler characteristic $-2 \times 243 = -486$, that is, on surfaces of genus $244$.  Of course, this means that the quotient surfaces of these actions have genus 2.  A description of the monodromy could be calculated easily with GAP, but we do not include it here.

We note it is entirely possible that some of the groups of even orders 64, 128 and 192 found by this calculation afford non-extending free surface actions.  By part 2 of our Theorem, it is certainly true that these groups afford free actions on surfaces that do not extend either freely or non-singularly.  However, further work must be done to determine if they have spherical subgroups that can be used to extend the actions singularly.

\subsection*{Examples from birational complex geometry}
In the previous section we provided a brief description of code written in GAP that allows one to find counterexample groups systematically.  Here we offer a computer-free alternative that exhibits intriguing connections to complex algebraic geometry.

In our past work \cite{me:Schur}, we defined the \emph{non-singular $G$-branched Schur multiplier} $M(G)_G$ to be the quotient of the Schur multiplier $M(G)$ by the subgroup generated by toral classes.  In this language, our above theorem shows $M(G)_G = 0$ if and only if every free surface action of $G$ extends non-singularly.  We recently learned that in a rather different context, Moravec showed that $M(G)_G$ is isomorphic (non-canonically) to the \emph{Bogomolov multiplier} $B_0(G)$ \cite{Moravec:unramified}, a well-studied cohomological invariant of $G$ pertinent to the complex Noether problem.  We review these ideas briefly now.

The Noether problem over the field $k$ asks: given a faithful $k$-representation $V$ of the finite group $G$, do the $G$-invariant rational functions $k(V)^G$ form a purely transcendental extension of $k$?  Or, in equivalent algebro-geometric terms: is $V/G$ a rational algebraic variety over $k$?  Recall that $k(V)^G$ is \emph{purely transcendental} if it is isomorphic to a field of rational functions in some finite number of variables, and $V/G$ is a \emph{rational} variety if some Zariski open subset is isomorphic to a Zariski open subset of a projective space $\mathbb{P}_k^d$.

The Bogomolov multiplier has played a central role in the discovery of counterexamples to the Noether problem over $\mathbb{C}$. Indeed, Saltman provided the first counterexamples by showing that if the \emph{unramified cohomology group} $H^2_{nr}(\mathbb{C}(V)^G,\mathbb{Q}/\mathbb{Z})$ is nonzero, then $G$ has a negative solution to the Noether problem over $\mathbb{C}$ \cite{Saltman:Noether}.  Shortly thereafter, Bogomolov showed $H^2_{nr}(\mathbb{C}(V)^G,\mathbb{Q}/\mathbb{Z})$ is naturally isomorphic to the subgroup $B_0(G)$ of $H^2(G,\mathbb{Q}/\mathbb{Z})$ consisting of classes that vanish when restricted to the abelian subgroups of $G$ \cite{Bogomolov:Brauer}.  The simpler, group-cohomological definition of $B_0(G)$ allowed Bogomolov to find many more counterexamples to the complex Noether problem.   Much later, Kunyavski\u{i} gave $B_0(G)$ the name \emph{Bogomolov multiplier} \cite{Kunyavskii:Bogomolov}.

More recently, Moravec gave a group-\emph{homological} description of $B_0(G)$.  Precisely, he showed that $B_0(G)$ is (non-canonically) isomorphic to the quotient of the Schur multiplier $M(G)$ by the subgroup represented by commutators of commuting elements of $G$---that is, by toral classes \cite[Lem.~3.1 \& Thm.~3.2]{Moravec:unramified}.  Thus, the Bogomolov multiplier $B_0(G)$ is (non-canonically) isomorphic to the non-singular $G$-branched Schur multiplier $M(G)_G$ we defined in \cite{me:Schur}.

Combining our work with the algebraic geometers' we arrive in a peculiar place.

\begin{prp}
If $G$ affords free surface actions that do not extend non-singularly, then $G$ is a counterexample to the Noether problem over $ \mathbb{ C}$. \qed
\end{prp}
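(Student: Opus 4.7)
The plan is to chain together the four results already assembled in the preceding discussion: Theorem~\ref{th:main}(2), Moravec's homological description of $B_0(G)$, Bogomolov's identification of $B_0(G)$ with unramified cohomology, and Saltman's criterion for negative solutions to the Noether problem over $\mathbb{C}$. Each individual link is stated verbatim (or nearly so) above, so the task is just to verify that the hypothesis about extending free surface actions feeds into the first link and that Saltman's conclusion is what we need at the other end.

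First I would translate the hypothesis. By Theorem~\ref{th:main}(2), the statement ``$G$ affords a free surface action that does not extend non-singularly'' is logically equivalent to ``$M(G)$ is not generated by toral classes,'' which in the notation of \cite{me:Schur} recalled above says that the non-singular $G$-branched Schur multiplier $M(G)_G$ is nonzero.

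Next I would invoke Moravec's theorem \cite[Lem.~3.1 \& Thm.~3.2]{Moravec:unramified}, already quoted in the excerpt, to pass from $M(G)_G$ to the Bogomolov multiplier: $B_0(G) \cong M(G)_G \neq 0$. Then Bogomolov's identification \cite{Bogomolov:Brauer} gives $H^2_{nr}(\mathbb{C}(V)^G,\mathbb{Q}/\mathbb{Z}) \cong B_0(G) \neq 0$ for any faithful complex representation $V$ of $G$. Finally, Saltman's theorem \cite{Saltman:Noether} states precisely that non-vanishing of this unramified cohomology group obstructs rationality of $V/G$, so $G$ is a counterexample to the Noether problem over $\mathbb{C}$.

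There is no real obstacle here; the proposition is essentially a dictionary entry once the three external results have been lined up with Theorem~\ref{th:main}(2). The only minor point worth flagging is that Moravec's and Bogomolov's isomorphisms are only natural up to non-canonical choice, but since we are only tracking whether the groups vanish or not, this is immaterial for the conclusion.
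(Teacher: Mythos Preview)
Your proof is correct and is exactly the argument the paper intends: the proposition is stated with a \qed{} because the preceding paragraphs already assemble Theorem~(2), Moravec's identification $M(G)_G \cong B_0(G)$, Bogomolov's isomorphism with unramified cohomology, and Saltman's criterion, and you have simply made that chain explicit. Your remark that non-canonicity of the isomorphisms is irrelevant since only (non-)vanishing is at stake is well taken.
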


We would very much like to know if there is an elementary proof of this proposition.

In any case, we are now in the convenient position where we can find counterexamples to the free extension conjecture by mining the literature on Bogomolov multipliers.  We just need odd order groups with non-vanishing $B_0(G)$.  We find such gems in several papers.  For example, Moravec uses GAP to show that 3 of the groups of order $3^5 = 243$ and 85 of the groups of order $3^6 = 729$ have non-vanishing $B_0(G)$ \cite{Moravec:unramified}.  Calculations with our own similar GAP functions (which were written independently before we learned of Moravec's work) described in the appendix confirm these results.  For the computer-skeptical, results of \cite{HoshiKangKunyavskii:unramified} show ``by hand" that for every odd prime $p$, there exist groups of order $p^5$ with non-trivial Bogomolov multipliers, and, hence, admit non-extending actions.  More counterexamples can be mined, but we leave them to future work.

\section{Concluding remarks}
In this short note our goal has been to describe the most direct route to establishing counterexamples to the free extension conjecture, while still providing some inspiration for many new paths to explore.  In forthcoming work with Segovia, we initiate a more thorough study of non-extending group actions on surfaces.  Some of our main results include:
\begin{itemize}
\item A complete homological characterization of finite group actions on surfaces (not necessarily free) that do not extend.
\item A proof that every free action of a finite simple group extends non-singularly. Likewise for finite Coxeter groups.
\end{itemize}

We conclude with a brief list of problems, some of which we hope to solve soon.
\begin{itemize}
\item Understand extendability questions for non-orientation-preserving actions.
\item Classify non-extending actions of $p$-groups.
\item Does there exist a non-solvable finite group that affords non-extending free actions?
\item Does there exist an action that extends---but only singularly?
\item Develop elementary connections between extendibility of actions and the Noether problem over $\mathbb{C}$.  In particular, is it true that if $G$ is a counterexample to the complex Noether problem, then $G$ affords non-extending actions?
\end{itemize}


\begin{thebibliography}{EVW13}
\expandafter\ifx\csname url\endcsname\relax
  \def\url#1{\texttt{#1}}\fi
\expandafter\ifx\csname doi\endcsname\relax
  \def\doi#1{\burlalt{doi:#1}{http://dx.doi.org/#1}}\fi
\expandafter\ifx\csname urlprefix\endcsname\relax\def\urlprefix{URL }\fi
\expandafter\ifx\csname href\endcsname\relax
  \def\href#1#2{#2}\fi
\expandafter\ifx\csname burlalt\endcsname\relax
  \def\burlalt#1#2{\href{#2}{#1}}\fi

\bibitem[Bog87]{Bogomolov:Brauer}
F.~A. Bogomolov.
\newblock The {B}rauer group of quotient spaces of linear representations.
\newblock {\em Izv. Akad. Nauk SSSR Ser. Mat.}, 51(3):485--516, 688, 1987.
\newblock \doi{10.1070/IM1988v030n03ABEH001024}.

\bibitem[Bra80]{Brand:branched}
Neal Brand.
\newblock Classifying spaces for branched coverings.
\newblock {\em Indiana Univ. Math. J.}, 29(2):229--248, 1980.
\newblock \doi{10.1512/iumj.1980.29.29015}.

\bibitem[CHK00]{CooperHodgsonKerckhoff:orbifolds}
Daryl Cooper, Craig~D. Hodgson, and Steven~P. Kerckhoff.
\newblock {\em Three-dimensional orbifolds and cone-manifolds}, volume~5 of
  {\em MSJ Memoirs}.
\newblock Mathematical Society of Japan, Tokyo, 2000.
\newblock With a postface by Sadayoshi Kojima.

\bibitem[DS21]{DominguezSegovia:Schur}
Jes\'{u}s~Emilio Dom\'{i}nguez and Carlos Segovia.
\newblock Extending free group action on surfaces, 2021,
  \burlalt{arXiv:2012.02464}{http://arxiv.org/abs/arXiv:2012.02464}.

\bibitem[EVW13]{EVW:Hurwitz2}
Jordan~S. Ellenberg, Akshay Venkatesh, and Craig Westerland.
\newblock Homological stability for {Hurwitz} spaces and the {Cohen-Lenstra}
  conjecture over function fields, {II}, 2013,
  \burlalt{arXiv:1212.0923v1}{http://arxiv.org/abs/arXiv:1212.0923v1}.

\bibitem[GAP21]{GAP}
The GAP~Group.
\newblock {\em {GAP -- Groups, Algorithms, and Programming, Version 4.11.1}},
  2021.
\newblock \urlprefix\url{https://www.gap-system.org}.

\bibitem[GZ95]{GradolatoZimmerman:hyperbolic}
Monique Gradolato and Bruno Zimmermann.
\newblock Extending finite group actions on surfaces to hyperbolic
  {$3$}-manifolds.
\newblock {\em Math. Proc. Cambridge Philos. Soc.}, 117(1):137--151, 1995.
\newblock \doi{10.1017/S0305004100072960}.

\bibitem[Hid94]{Hidalgo:Schottky}
Rub\'{e}n~A. Hidalgo.
\newblock On {S}chottky groups with automorphisms.
\newblock {\em Ann. Acad. Sci. Fenn. Ser. A I Math.}, 19(2):259--289, 1994.
\newblock
  \urlprefix\url{http://www.acadsci.fi/mathematica/Vol19/hidalgo2.html}.

\bibitem[HKK13]{HoshiKangKunyavskii:unramified}
Akinari Hoshi, Ming-Chang Kang, and Boris~E. Kunyavskii.
\newblock Noether's problem and unramified {B}rauer groups.
\newblock {\em Asian J. Math.}, 17(4):689--713, 2013.
\newblock \doi{10.4310/AJM.2013.v17.n4.a8}.

\bibitem[JM14]{JezernikMoravec:128}
Urban Jezernik and Primo\v{z} Moravec.
\newblock Bogomolov multipliers of groups of order 128.
\newblock {\em Exp. Math.}, 23(2):174--180, 2014.
\newblock \doi{10.1080/10586458.2014.886980}.

\bibitem[Kun10]{Kunyavskii:Bogomolov}
Boris Kunyavski\u{\i}.
\newblock The {B}ogomolov multiplier of finite simple groups.
\newblock In {\em Cohomological and geometric approaches to rationality
  problems}, volume 282 of {\em Progr. Math.}, pages 209--217. Birkh\"{a}user
  Boston, Boston, MA, 2010.
\newblock \doi{10.1007/978-0-8176-4934-0\_8}.

\bibitem[Mor12]{Moravec:unramified}
Primo\v{z} Moravec.
\newblock Unramified {B}rauer groups of finite and infinite groups.
\newblock {\em Amer. J. Math.}, 134(6):1679--1704, 2012.
\newblock \doi{10.1353/ajm.2012.0046}.

\bibitem[Oss72]{Ossa:abelian}
Erich Ossa.
\newblock Unitary bordism of abelian groups.
\newblock {\em Proc. Amer. Math. Soc.}, 33:568--571, 1972.
\newblock \doi{10.2307/2038101}.

\bibitem[Row80]{Rowlett:metacyclic}
Russell~J. Rowlett.
\newblock Bordism of metacyclic group actions.
\newblock {\em Michigan Math. J.}, 27(2):223--233, 1980.
\newblock \urlprefix\url{http://projecteuclid.org/euclid.mmj/1029002359}.

\bibitem[RZ96]{ReniZimmerman:handlebodies}
Marco Reni and Bruno Zimmermann.
\newblock Extending finite group actions from surfaces to handlebodies.
\newblock {\em Proc. Amer. Math. Soc.}, 124(9):2877--2887, 1996.
\newblock \doi{10.1090/S0002-9939-96-03515-0}.

\bibitem[Sal84]{Saltman:Noether}
David~J. Saltman.
\newblock Noether's problem over an algebraically closed field.
\newblock {\em Invent. Math.}, 77(1):71--84, 1984.
\newblock \doi{10.1007/BF01389135}.

\bibitem[Sam20]{me:Schur}
Eric Samperton.
\newblock Schur-type invariants of branched {$G$}-covers of surfaces.
\newblock In {\em Topological phases of matter and quantum computation}, volume
  747 of {\em Contemp. Math.}, pages 173--197. Amer. Math. Soc., Providence,
  RI, 2020, \burlalt{arXiv:1709.03182}{http://arxiv.org/abs/arXiv:1709.03182}.
\newblock \doi{10.1090/conm/747/15045}.

\bibitem[Sto70]{Stong:actions}
R.~E. Stong.
\newblock {\em Unoriented bordism and actions of finite groups}.
\newblock Memoirs of the American Mathematical Society, No. 103. American
  Mathematical Society, Providence, R.I., 1970.

\bibitem[Uri18]{Uribe:evenness}
Bernardo Uribe.
\newblock The evenness conjecture in equivariant unitary bordism.
\newblock In {\em Proceedings of the {I}nternational {C}ongress of
  {M}athematicians---{R}io de {J}aneiro 2018. {V}ol. {II}. {I}nvited lectures},
  pages 1217--1239. World Sci. Publ., Hackensack, NJ, 2018.

\bibitem[Uri21]{Uribe:personal}
Bernardo Uribe, 2021.
\newblock Private communication.

\end{thebibliography}
\end{document}